\def\thtext#1{
  \catcode`@=11
  \gdef\@thmcountersep{. #1}
  \catcode`@=12
}
\def\threst{
  \catcode`@=11
  \gdef\@thmcountersep{.}
  \catcode`@=12
}
\theoremstyle{plain}
\newtheorem{thm}{Theorem}[section]
\newtheorem{prop}[thm]{Proposition}
\newtheorem{cor}[thm]{Corollary}
\newtheorem{lem}[thm]{Lemma}
\theoremstyle{definition}
 \def\.{.\spacefactor\@m}
\def\R{\mathbb R}
\def\a{\alpha}
\def\D{\Delta}
\def\g{\gamma}
\def\l{\lambda}
\def\0{\emptyset}
\def\:{\colon}
\def\<{\langle}
\def\>{\rangle}
\def\rom#1{\emph{#1}}
\def\({\rom(}
\def\){\rom)}
\def\ss{\subset}
\def\bcAD{\overline{\mathstrut\cAD}}
\def\diam{\operatorname{diam}}
\def\Ext{\operatorname{Ext}}
\def\cAD{\mathcal{AD}}
\def\cD{\mathcal{D}}
\def\cM{\mathcal{M}}
\begin{document}
\title{The Gromov--Hausdorff Distance between Simplexes and Two-Distance Spaces}
\author{A.\,O.~Ivanov, and A.\,A.~Tuzhilin}
\date{}
\maketitle

\begin{abstract}
In the present paper we calculate the Gromov--Hausdorff distance between an arbitrary simplex  (a metric space all whose non-zero distances are the same) and a finite metric space whose non-zero distances take two distinct values (so-called $2$-distance spaces). As a corollary, a complete solution to generalized Borsuk problem for the $2$-distance spaces is obtained. In addition, we derive formulas for the clique covering number and for the chromatic number of an arbitrary graph $G$ in terms of the Gromov--Hausdorff distance between a simplex and an appropriate $2$-distance space constructed by the graph $G$.
\end{abstract}

\section*{Introduction}
\markright{Introduction}
Finite metric spaces appear in different branches of mathematics, play important role in applications, give many interesting problems, and they are a subject of active research for many specialists, see numerous examples in~\cite{DezaDeza}. An important class of these spaces is formed by the ones where the distance between different points takes two values (to be short, those spaces are referred as \emph{$2$-distance spaces\/}). Finite subsets of Euclidean spaces having this property are of special interest. The fact is that they are closely related to such popular branches as spherical codes and spherical design, see for example~\cite{Musin}, and Borsuk problem too. Let us concentrate on the latter one in more details. 

Recall that as early as 1933 Borsuk formulated a conjecture that each bounded non single-point subset of Euclidean space $\R^n$ can be partitioned into $k\le n+1$ subsets whose diameters are less than the dimeter of the initial set, and proved it for  $n=2$, see~\cite{Borsuk1},~\cite{Borsuk2}. Perkal~\cite{Perkal} extended the result to the case $n=3$. Also some essential progress was made in 40--50th. Let us mention Hadwiger, who proved the conjecture for convex subsets, see~\cite{Hadw1},~\cite{Hadw2}. However, in 1993 Kahn and Kalai ~\cite{KahnKalai} constructed an unexpected counterexample in dimension $n=1325$, and also proved that the conjecture is not valid for all $n>2014$.  This estimate was consistently improved by Raigorodskii, $n\ge561$, Hinrichs and Richter, $n\ge298$, Bondarenko, $n\ge 65$, and Jenrich, $n\ge 64$, see details in a review~\cite{Raig}. The results of Bondarenko~\cite{Bond} and Jenrich~\cite{Jenr} are based on the $2$-distance subsets of the unit sphere.

Recently the authors discovered, see~\cite{IvaTuzBorsuk}, that Generalized Borsuk Problem (the same question for an arbitrary bounded metric space $X$, which is not necessary a subset of $\R^n$) can be solved by means of the Gromov--Hausdorff distance between $X$ and so-called simplexes, i.e., metric spaces all whose non-zero distances are the same (see Theorem~\ref{thm:Borsuk} below).

Recall that Edwards~\cite{Edwards} and Gromov~\cite{Gromov} defined independently a distance function between metric spaces generalising the construction of Hausdorff and using isometrical embeddings into all possible ambient spaces, see definitions below. A detailed introduction to geometry of the Gromov--Hausdorff distance can be found in~\cite[Ch.~7]{BurBurIva} or in~\cite{ITlectHGH}.

Calculation of the Gromov--Hausdorff distance between two given spaces is a rather non-trivial problem. In paper~\cite{IvaTuzSimpDist} the distances between finite simplexes and compact metric spaces were calculated for some particular cases. Later on, these results were generalized to the case of arbitrary bounded metric spaces, see~\cite{GrigIvaTuz}. Some additional characteristics of the bounded metric spaces were defined, and in this terms either exact formulas for the Gromov--Hausdorff distance to an arbitrary simplex, or an exact upper and low estimates for these distances were obtained. Just those formulas permitted to discover a relations between the distances to simplexes and Borsuk problem, see~\cite{IvaTuzBorsuk}.  After that, using a geometrical interpretation, formulas from~\cite{GrigIvaTuz} have been rewritten in a more convenient way that gives an opportunity to calculate the distances between simplexes and an arbitrary finite ultrametric space~\cite{IvaTuzUltra}.

In the present paper, formulas from~\cite{IvaTuzUltra} are used to calculate the Gromov--Hausdorff distances from simplexes to finite $2$-distance spaces (Theorem~\ref{thm:main}). That permits to obtain a complete solution to generalized Borsuk problem for such spaces. The answer  (Corollary~\ref{cor:Borsuk}) is given in terms of the clique covering number of the graph $G$ with the vertex set $X$ and edge set consisting of all the pairs of points from $X$ with the smaller distance. Notice that the idea to verify Borsuk conjecture for $2$-distance spaces was suggested by David Larman in 70th.

Besides, the same technique gives an opportunity to calculate the clique covering number and the chromatic number of an arbitrary simple graph in terms of the Gromov--Hausdorff distance from simplexes to a finite $2$-distance metric space constructed by this graph, see Corollaries~\ref{cor:chrom} and~\ref{cor:clique}.

The work is partly supported by President RF Program supporting leading scientific schools of Russia (Project NSh--6399.2018.1, Agreement~075--02--2018--867), by RFBR, Project~19-01-00775-a, and also by MGU scientific schools support program.

\section{Preliminaries}
\markright{\thesection.~Preliminaries}
Let $X$ be an arbitrary set. By $\#X$ we denote the \emph{cardinality\/} of the set $X$.

Let $X$ be an arbitrary metric space. The distance between any its points $x$ and $y$ we denote by $|xy|$. If $A,B\ss X$ are non-empty subsets of $X$, then put $|AB|=\inf\bigl\{|ab|:a\in A,\,b\in B\bigr\}$. For $A=\{a\}$, we write $|aB|=|Ba|$ instead of $|\{a\}B|=|B\{a\}|$.

For each  point $x\in X$ and a number $r>0$, by $U_r(x)$ we denote the open ball with center $x$ and radius $r$; for any non-empty $A\ss X$ and a number $r>0$ put $U_r(A)=\cup_{a\in A}U_r(a)$.

\subsection{Hausdorff and Gromov--Hausdorff Distances}
For non-empty $A,\,B\ss X$ put
\begin{multline*}
d_H(A,B)=\inf\bigl\{r>0:A\ss U_r(B),\ \text{and}\ B\ss U_r(A)\bigr\}\\ =\max\{\sup_{a\in A}|aB|,\ \sup_{b\in B}|Ab|\}.
\end{multline*} 
This value is called the \emph{Hausdorff distance between $A$ and $B$}. It is well-known, see~\cite{BurBurIva}, \cite{ITlectHGH}, that the Hausdorff distance is a metric on the set of all non-empty bounded closed subsets of $X$.

Let $X$ and $Y$ be metric spaces. A triple $(X',Y',Z)$ consisting of a metric space $Z$ together with its subsets $X'$ and $Y'$ isometric to $X$ and $Y$, respectively, is called a \emph{realization of the pair $(X,Y)$}. The \emph{Gromov--Hausdorff distance $d_{GH}(X,Y)$ between $X$ and $Y$} is the infimum of real numbers $r$ such that there exists a realization  $(X',Y',Z)$ of the pair $(X,Y)$ with $d_H(X',Y')\le r$. It is well-known~\cite{BurBurIva}, \cite{ITlectHGH}, that $d_{GH}$ is a metric on the set $\cM$ of all compact metric spaces considered up to an isometry.

For an arbitrary metric space $X$, by $\diam X$ we denote its \emph{diameter\/} defined in the standard way:
$$
\diam X=\sup\bigl\{|xy|:x,y\in X\bigr\}.
$$
Notice that the space $X$ is bounded if and only if $\diam X<\infty$.

A metric space $X$ is called \emph{simplex\/} if all its non-zero distances are the same. A simplex whose non-zero distances equal $\l>0$ is denoted by $\l\D$.

\begin{prop}[\cite{BurBurIva}, \cite{ITlectHGH}]\label{prop:GH_simple}
Let $X$ be an arbitrary metric space, and $\D$ a single-point space. Then $d_{GH}(\l\D,X)=\frac12\diam X$ for any  $\l>0$.
\end{prop}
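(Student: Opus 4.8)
The plan is to prove the two inequalities $d_{GH}(\l\D,X)\le\frac12\diam X$ and $d_{GH}(\l\D,X)\ge\frac12\diam X$ separately, since they combine to give the asserted equality. First I would note that because $\D$ is a single point, the space $\l\D$ is isometric to $\D$ for every $\l>0$ (there are no non-zero distances to rescale), so it suffices to treat a one-point space, which I will denote $\{p\}$. I would also dispose of the degenerate cases at once: if $\diam X=0$ then $X$ is a point and both sides vanish, and if $\diam X=\infty$ then both sides are infinite; so assume $X$ is bounded with $\diam X>0$.

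For the upper bound I would exhibit an explicit realization of the pair $(\l\D,X)$. Set $Z=X\sqcup\{p\}$ and define $\r$ on $Z$ by keeping the original metric on $X$ and putting $\r(p,x)=\frac12\diam X$ for every $x\in X$. The only triangle inequalities that need checking are those involving $p$ and two points $x,y\in X$: on one hand $\r(p,x)+\r(p,y)=\diam X\ge|xy|$, and on the other $\r(p,x)=\frac12\diam X\le\frac12\diam X+|xy|=\r(p,y)+|xy|$. Hence $\r$ is a metric, $Z$ contains isometric copies $X'=X$ of $X$ and $Y'=\{p\}$ of $\l\D$, and $d_H(X',Y')=\sup_{x\in X}\r(p,x)=\frac12\diam X$. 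This realization is exact (no $\e$ is needed), so $d_{GH}(\l\D,X)\le\frac12\diam X$.

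For the lower bound I would take an arbitrary realization $(X',Y',Z)$ of the pair with $Y'=\{q\}$ a single point and apply the triangle inequality in $Z$. Since $Y'$ is a single point, $d_H(X',Y')=\sup_{x'\in X'}|x'q|$, so for any $x',y'\in X'$ we have $|x'y'|\le|x'q|+|qy'|\le 2\,d_H(X',Y')$. Taking the supremum over $x',y'\in X'$ gives $\diam X=\diam X'\le 2\,d_H(X',Y')$, and then taking the infimum over all realizations yields $d_{GH}(\l\D,X)\ge\frac12\diam X$. Combining the two bounds proves the proposition.

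I do not expect a genuine obstacle here. The only points requiring a moment's care are the two degenerate values $\diam X\in\{0,\infty\}$ and the verification that attaching the point $p$ at distance $\frac12\diam X$ does not create a shortcut between points of $X$, and both are immediate from the triangle inequality check above.
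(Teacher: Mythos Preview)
Your argument is correct and is exactly the standard one. Note, however, that the paper does not supply its own proof of this proposition: it is quoted as a known fact with references to \cite{BurBurIva} and \cite{ITlectHGH}, so there is no in-paper proof to compare against. The proof you give---an explicit realization $Z=X\sqcup\{p\}$ with $\r(p,x)=\tfrac12\diam X$ for the upper bound, and the triangle inequality $|x'y'|\le|x'q|+|qy'|$ inside an arbitrary realization for the lower bound---is precisely the argument one finds in those references, so nothing is missing or nonstandard.
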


\begin{thm}[\cite{GrigIvaTuz}]\label{thm:dist-n-simplex-bigger-dim}
Let $X$ be an arbitrary bounded metric space, and $\#X<\#\l\D$, then
$$
2d_{GH}(\l\D,X)=\max\{\diam X-\l,\,\l\}.
$$
\end{thm}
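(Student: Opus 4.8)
The plan is to prove both inequalities $2d_{GH}(\l\D,X)\le\max\{\diam X-\l,\,\l\}$ and $2d_{GH}(\l\D,X)\ge\max\{\diam X-\l,\,\l\}$ separately, using the standard correspondence/relation characterization of the Gromov--Hausdorff distance. Recall that for metric spaces $X,Y$ one has $2d_{GH}(X,Y)=\inf_R \dis R$, where the infimum runs over all correspondences $R\ss X\x Y$ (relations whose projections onto both factors are surjective), and $\dis R=\sup\bigl\{\,\bigl||xx'|-|yy'|\bigr| : (x,y),(x',y')\in R\,\bigr\}$. Here $Y=\l\D$ has strictly larger cardinality than $X$, so every correspondence must send at least two distinct points of $\l\D$ to a common region overlapping the image structure; this asymmetry is what forces the $\l$ term to appear.

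For the upper bound I would exhibit an explicit correspondence. Write $\l\D=\{y_i\}_{i\in I}$ with $\#I=\#\l\D>\#X$. Pick a surjection $f\:I\to X$ (possible since $\#I\ge\#X$) and set $R=\bigl\{(f(i),y_i):i\in I\bigr\}$; this is a correspondence. Now estimate $\dis R$: given $(f(i),y_i)$ and $(f(j),y_j)$, the quantity $\bigl||f(i)f(j)|-|y_iy_j|\bigr|$ is $0$ when $i=j$ or $f(i)=f(j)$; when $f(i)\ne f(j)$ it equals $\bigl||f(i)f(j)|-\l\bigr|$, which is at most $\max\{\diam X-\l,\,\l\}$ since $0\le|f(i)f(j)|\le\diam X$ forces $|f(i)f(j)|-\l\in[-\l,\diam X-\l]$. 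There is however one subtle case: if $f(i)\ne f(j)$ it is not automatic that $|f(i)f(j)|>0$... actually it is, since $f(i)\ne f(j)$ are distinct points of the metric space $X$. Hence $\dis R\le\max\{\diam X-\l,\l\}$, giving the upper bound.

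For the lower bound, let $R$ be an arbitrary correspondence between $\l\D$ and $X$; I must show $\dis R\ge\max\{\diam X-\l,\l\}$. To get the $\diam X-\l$ part, choose $x,x'\in X$ with $|xx'|$ close to $\diam X$, pick any $y,y'\in\l\D$ with $(y,x),(y',x')\in R$; if $y\ne y'$ then $\bigl||xx'|-\l\bigr|\ge\diam X-\l-\e$ contributes, and if $y=y'$ we can still find (by surjectivity onto $\l\D$) a witness, but more directly $|xx'|-|yy'|=|xx'|\ge\diam X-\e$ which is even larger, so in all cases $\dis R\ge\diam X-\l$. To get the $\l$ part — this is the step I expect to be the crux — I use $\#\l\D>\#X$: the projection $R\to\l\D$ is onto, so by cardinality there exist two distinct points $y\ne y'$ in $\l\D$ and a single $x\in X$ with $(y,x),(y',x)\in R$ (otherwise the map $\l\D\to X$ picking a partner would be injective, contradicting $\#\l\D>\#X$); then $\bigl||yy'|-|xx|\bigr|=\l$, so $\dis R\ge\l$. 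Combining, $\dis R\ge\max\{\diam X-\l,\l\}$ for every $R$, hence $2d_{GH}(\l\D,X)\ge\max\{\diam X-\l,\l\}$, and together with the upper bound the theorem follows. The main obstacle is handling the degenerate configurations cleanly (when chosen witnesses in $\l\D$ coincide, or when $\diam X-\l$ is negative so the $\l$ term dominates) and making the pigeonhole argument for the $\l$ term rigorous when $\#\l\D$ is infinite — there one argues that an injection $\l\D\hookrightarrow X$ cannot exist, which still holds for infinite cardinalities.
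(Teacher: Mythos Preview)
The paper does not prove this theorem; it is quoted from~\cite{GrigIvaTuz} without argument, so there is no in-paper proof to compare against. Your correspondence approach is the standard one and is essentially correct, yielding the formula directly.

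There is, however, one slip in your upper-bound computation. You claim that $\bigl||f(i)f(j)|-|y_iy_j|\bigr|=0$ when $f(i)=f(j)$. This is false when $i\ne j$ and $f(i)=f(j)$: then $|f(i)f(j)|=0$ while $|y_iy_j|=\l$, so the difference equals $\l$, not $0$. Since $\#I>\#X$ the surjection $f$ is never injective, so this case \emph{always} occurs and cannot be ignored. Fortunately $\l\le\max\{\diam X-\l,\l\}$, so the bound $\dis R\le\max\{\diam X-\l,\l\}$ survives; you just need to correct the case analysis. A minor notational point: in the lower bound you switch the order of pairs, writing $(y,x)\in R$ after having set $R\ss X\x Y$; this is harmless but worth cleaning up.
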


Let $X$ be an arbitrary set and $m$ a cardinal number that does not exceed $\#X$. By  $\cD_m(X)$ we denote the family of all possible partitions of the set $X$ into $m$ non-empty subsets.

Now let $X$ be a metric space. Then for each $D=\{X_i\}_{i\in I}\in\cD_m(X)$ put
$$
\diam D=\sup_{i\in I}\diam X_i.
$$
Further, for each $D=\{X_i\}_{i\in I}\in\cD_m(X)$ put
$$
\a(D)=\inf\bigl\{|X_iX_j|:i\ne j\bigr\}.
$$

In the plane with the standard coordinates $(\a,d)$, consider the set
$$
\cAD_m(X)=\Bigl\{\bigl(\a(D),\diam D\bigr):D\in\cD_m(X)\Bigr\}
$$
and let $\bcAD_m(X)$ stand for the closure of this subset of the plane. A point $(\a,d)\in\bcAD_m(X)$ is said to be  \emph{extreme\/} if there is no other point $(\a',d')\in\bcAD_m(X)$, $(\a',d')\ne (\a,d)$, such that $\a'\ge\a$ and $d'\le d$. By $\Ext_m(X)$ we denote the set of all extreme points from $\bcAD_m(X)$. In addition, put $h_{\a,d}(\l)=\max\{d,\,\l-\a\}$.

\begin{thm}[\cite{IvaTuzUltra}]\label{thm:extr}
Let $X$ be an arbitrary bounded metric space, and $m=\#\l\D\le\#X$. Then the set $\Ext_m(X)$ is non-empty, and
$$
2d_{GH}(\l\D,X)=\max\bigl\{\diam X-\l,\,\inf_{(\a,d)\in\Ext_m(X)}h_{\a,d}(\l)\bigr\}.
$$
\end{thm}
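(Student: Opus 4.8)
My plan is to work with the standard description of the Gromov--Hausdorff distance via \emph{correspondences}: a correspondence between metric spaces $X$ and $Y$ is a relation $R\ss X\x Y$ with both projections surjective, its \emph{distortion} is $\dis R=\sup\bigl\{\bigl||xx'|-|yy'|\bigr|:(x,y),(x',y')\in R\bigr\}$, and $2d_{GH}(X,Y)=\inf_R\dis R$ (see~\cite{BurBurIva},~\cite{ITlectHGH}). Taking $Y=\l\D$ with point set $I$, $\#I=m$, a correspondence $R$ between $X$ and $\l\D$ is the same thing as a covering $\{X_i\}_{i\in I}$ of $X$ by non-empty sets $X_i=\{x\in X:(x,i)\in R\}$. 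First I would record three elementary facts: (i)~$\diam X_i\le\dis R$ for every $i$; (ii)~$\dis R\ge\diam X-\l$, seen by taking $x,x'$ with $|xx'|$ close to $\diam X$; (iii)~$\dis R\ge\l$ whenever $X_i\cap X_j\ne\0$ for some $i\ne j$, since then $(x,i),(x,j)\in R$ for a common $x$. And when $\{X_i\}$ is a partition $D\in\cD_m(X)$, writing $R_D$ for the associated correspondence and distinguishing the cases $i=j$ and $i\ne j$ among the pairs $(x,i),(x',j)\in R_D$, a direct computation gives
$$
\dis R_D=\max\bigl\{\diam D,\ \diam X-\l,\ \l-\a(D)\bigr\}=\max\bigl\{\diam X-\l,\ h_{\a(D),\diam D}(\l)\bigr\}.
$$

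From this identity the inequality $2d_{GH}(\l\D,X)\le\max\{\diam X-\l,\ \inf_{D\in\cD_m(X)}h_{\a(D),\diam D}(\l)\}$ is immediate. The heart of the proof is the reverse inequality, i.e.\ showing that an arbitrary correspondence $R$ can be replaced by $R_D$ for some $D\in\cD_m(X)$ with $\dis R_D\le\dis R$; then, combining with the upper bound and commuting the maximum with the constant $\diam X-\l$, one obtains $2d_{GH}(\l\D,X)=\inf_{D\in\cD_m(X)}\dis R_D=\max\{\diam X-\l,\ \inf_{(\a,d)\in\cAD_m(X)}h_{\a,d}(\l)\}$. To produce such a $D$, put $r=\dis R$ and distinguish two cases. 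If $r<\l$, then $\{X_i\}$ has no overlaps by~(iii), hence is already a partition $D\in\cD_m(X)$; the inequality $\bigl||xx'|-\l\bigr|\le r$ for $x\in X_i$, $x'\in X_j$ with $i\ne j$ gives $\a(D)\ge\l-r$, so by~(i) and~(ii) all three terms of $\dis R_D$ are $\le r$. If $r\ge\l$, I would first pass to the partition with parts $X_i\sm\bigcup_{j<i}X_j$ (whose diameters are still $\le r$ by~(i)), and then, using $\#X\ge m$, regroup its non-empty members into a partition $D\in\cD_m(X)$ with exactly $m$ parts, splitting parts where necessary --- an operation that cannot increase diameters; now $\diam D\le r$ and $\l-\a(D)\le\l\le r$, so again $\dis R_D\le r$.

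The last step is to replace $\cAD_m(X)$ by $\Ext_m(X)$. Since $h_{\a,d}(\l)=\max\{d,\l-\a\}$ is continuous, its infimum over $\cAD_m(X)$ equals its infimum over the closure $\bcAD_m(X)$, which is non-empty (as $\cD_m(X)\ne\0$) and, $X$ being bounded, compact. Because $h_{\a,d}(\l)$ is non-decreasing in $d$ and non-increasing in $\a$, it suffices to check that every point of $\bcAD_m(X)$ is dominated --- in the sense $\a'\ge\a$ and $d'\le d$ --- by a point of $\Ext_m(X)$: for $(\a,d)\in\bcAD_m(X)$ the set $\{(\a',d')\in\bcAD_m(X):\a'\ge\a,\ d'\le d\}$ is non-empty and compact, and any of its points maximising $\a'-d'$ is readily seen to be extreme. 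This also yields $\Ext_m(X)\ne\0$, finishing the proof.

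I expect the genuine obstacle to be the lower bound in the case $r\ge\l$ --- arranging the reduction to a partition into \emph{exactly} $m$ non-empty parts while keeping every diameter $\le r$, and checking that the term $\l-\a(D)$ is then automatically harmless --- together with the bookkeeping that nothing is lost in passing to the closure and to the extreme points. The distortion computation for $R_D$ and the bound $\dis R\ge\diam X-\l$ are routine.
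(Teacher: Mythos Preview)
The paper does not prove this theorem: it is quoted from~\cite{IvaTuzUltra} and used as a black box, so there is no proof in the present paper to compare your proposal against.

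That said, your argument is sound and is essentially the standard route (and, as far as one can tell from the companion papers, the one taken in~\cite{GrigIvaTuz} and~\cite{IvaTuzUltra}): reduce correspondences $R$ with $\l\D$ to partition-correspondences $R_D$, compute $\dis R_D=\max\{\diam X-\l,\,h_{\a(D),\diam D}(\l)\}$, and then pass from $\cAD_m(X)$ to $\Ext_m(X)$ by monotonicity of $h$ and a compactness/domination argument. Your case split $r<\l$ versus $r\ge\l$ for the reduction step is the right idea, and the observation that in the second case $\l-\a(D)\le\l\le r$ makes $\a(D)$ irrelevant is exactly the point. One minor caveat: the ``split parts to reach exactly $m$ pieces'' step is clean when $m$ is finite (the only case actually used in this paper), but for infinite cardinals $m$ it needs a sentence of cardinal arithmetic to justify that a partition of a set of cardinality $\ge m$ into at most $m$ non-empty blocks can always be refined to one with exactly $m$ non-empty blocks.
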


\section{The Gromov--Hausdorff Distance between Simplexes and $2$-distance Spaces}
\markright{\thesection.~The distance between simplexes and $2$-distance spaces}
Now, let us show how the formulas for the Gromov--Hausdorff distances between simplexes and $2$-distance spaces can be derived from Theorems~\ref{thm:dist-n-simplex-bigger-dim} and~\ref{thm:extr}.

\textbf{In this Section, $X$ always stands for a finite $2$-distance space whose non-zero distances are $a$ and $b$, $a<b$; in addition put $n=\#X$ and $m=\#\l\D$}. It is clear that $\diam X=b$.

Theorem~\ref{thm:dist-n-simplex-bigger-dim} implies the following result.

\begin{cor}
Let $m>n$. Then
$$
2d_{GH}(\l\D,X)=\max\{b-\l,\,\l\}.
$$
\end{cor}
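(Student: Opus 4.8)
The plan is to apply Theorem~\ref{thm:dist-n-simplex-bigger-dim} directly, since its hypotheses are met verbatim. Recall that $X$ is a finite $2$-distance space with non-zero distances $a<b$, so $X$ is bounded with $\diam X=b$, and we have $n=\#X$ and $m=\#\l\D$. The assumption $m>n$ says precisely that $\#\l\D>\#X$, i.e. $\#X<\#\l\D$, which is the cardinality condition required by Theorem~\ref{thm:dist-n-simplex-bigger-dim}.

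Given this, Theorem~\ref{thm:dist-n-simplex-bigger-dim} yields
$$
2d_{GH}(\l\D,X)=\max\{\diam X-\l,\,\l\},
$$
and substituting $\diam X=b$ gives the claimed formula $2d_{GH}(\l\D,X)=\max\{b-\l,\,\l\}$. Since $\l>0$ is arbitrary, the formula holds for every simplex $\l\D$ with $m=\#\l\D>n$.

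There is essentially no obstacle here: the corollary is a straightforward specialization of Theorem~\ref{thm:dist-n-simplex-bigger-dim} to the case where the bounded metric space $X$ is a $2$-distance space, using only the elementary observation that the diameter of such a space equals its larger distance $b$. The one point worth stating explicitly is the translation between the strict inequality $m>n$ on cardinalities and the hypothesis $\#X<\#\l\D$ of the cited theorem, but this is immediate. Hence the proof consists of a single invocation of Theorem~\ref{thm:dist-n-simplex-bigger-dim}.
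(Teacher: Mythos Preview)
Your proof is correct and matches the paper's approach exactly: the paper simply states that the corollary follows from Theorem~\ref{thm:dist-n-simplex-bigger-dim}, and your argument spells out precisely this one-line specialization (checking $\#X<\#\l\D$ and substituting $\diam X=b$).
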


If $m=n$, then $\cD_m(X)$ consists of a single element $D$, namely, $D$ is the partition of $X$ into single-element subsets. Thus, in this case, $\diam D=0$ and $\a(D)=a$. Theorem~\ref{thm:extr} immediately implies the following result.

\begin{cor}
Let $m=n$. Then
$$
2d_{GH}(\l\D,X)=\max\{b-\l,\,\l-a\}.
$$
\end{cor}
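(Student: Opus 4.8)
The plan is to apply Theorem~\ref{thm:extr} directly, the point being that the hypothesis $m=n$ collapses the relevant combinatorics to a single partition. First I would observe that $\cD_n(X)$ consists of exactly one element, namely the partition $D_0$ of $X$ into $n$ singletons, since that is the only way to split an $n$-point set into $n$ non-empty pieces. For $D_0$ every block is a one-point set, so $\diam D_0=0$; and $\a(D_0)=\inf\{|X_iX_j|:i\ne j\}$ is the least non-zero distance occurring in $X$, which is $a$. Hence $\bcAD_n(X)=\{(a,0)\}$, and as there is no competing point, $(a,0)$ is trivially extreme, so $\Ext_n(X)=\{(a,0)\}$.

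Next I would substitute this into the formula of Theorem~\ref{thm:extr}. Since $\diam X=b$ and the infimum over $\Ext_n(X)$ is attained at the single point $(a,0)$, and since $h_{a,0}(\l)=\max\{0,\,\l-a\}$ by definition of $h_{\a,d}$, we obtain
$$
2d_{GH}(\l\D,X)=\max\bigl\{b-\l,\ h_{a,0}(\l)\bigr\}=\max\bigl\{b-\l,\ \max\{0,\,\l-a\}\bigr\}=\max\{b-\l,\ 0,\ \l-a\}.
$$

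Finally I would eliminate the redundant $0$: because $a<b$, the two quantities $b-\l$ and $\l-a$ sum to the positive number $b-a$, so at least one of them is positive, whence $\max\{b-\l,0,\l-a\}=\max\{b-\l,\l-a\}$, which is the asserted identity. There is essentially no hard step here, as Theorem~\ref{thm:extr} does all the work; the only points meriting explicit mention are the identification of the unique partition $D_0$ (together with its invariants $\diam D_0=0$, $\a(D_0)=a$) and the harmless dropping of the $0$ from the maximum.
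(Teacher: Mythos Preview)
Your proof is correct and follows exactly the route the paper takes: identify the unique partition into singletons, read off $\diam D_0=0$ and $\a(D_0)=a$, and invoke Theorem~\ref{thm:extr}. You simply spell out in detail what the paper leaves implicit, including the harmless elimination of the $0$ from the maximum.
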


Another trivial case is $m=1$. Proposition~\ref{prop:GH_simple} implies the next formula.

\begin{cor}
Let $m=1$. Then
$$
2d_{GH}(\l\D,X)=b.
$$
\end{cor}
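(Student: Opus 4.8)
The plan is to observe that this is the degenerate case in which the simplex reduces to a one-point space, so that the statement follows immediately from Proposition~\ref{prop:GH_simple}, with no recourse to Theorems~\ref{thm:dist-n-simplex-bigger-dim} or~\ref{thm:extr}. Concretely, when $m=\#\l\D=1$, the space $\l\D$ consists of a single point, and this holds for every $\l>0$ (the value of $\l$ plays no role for a one-point space, so $\l\D$ is, up to isometry, the single-point space $\D$). Hence Proposition~\ref{prop:GH_simple}, applied to the metric space $X$ in the role of ``$X$'', yields $d_{GH}(\l\D,X)=\frac12\diam X$.

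It then remains only to recall that, by the standing assumption of this section, $X$ is a finite $2$-distance space whose non-zero distances are $a<b$, so that $\diam X=b$ (as already noted just after the displayed conventions fixing $n$ and $m$). Substituting this into the identity above gives $2d_{GH}(\l\D,X)=\diam X=b$, which is the asserted formula. I expect no real obstacle here: the only point worth stating explicitly is that the hypothesis $m=1$ forces $\l\D$ to be a single-point space, which is exactly the configuration covered by Proposition~\ref{prop:GH_simple}; everything else is a one-line substitution.
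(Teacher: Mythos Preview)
Your argument is correct and matches the paper's own justification exactly: the paper simply says that Proposition~\ref{prop:GH_simple} implies the formula, and you have spelled out precisely that deduction together with the observation $\diam X=b$. There is nothing to add.
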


It remains to consider the case $1<m<n$. Here for each $D\in\cD_m(X)$ the characteristics $\diam D$ and $\a(D)$ take one of the values $a$ and $b$, therefore, in this case the sets $\Ext_m(X)\ss\bcAD_m(X)=\cAD_m(X)$ are contained in the corresponding four-element subset of the plane. The next Lemma follows directly from the definition of an extreme point.

\begin{lem}\label{lem:extremeCases}
For $1<m<n$ the following situations are possible\/\rom:
\begin{enumerate}
\item If $(b,a)\in\cAD_m(X)$, then $\Ext_m(X)=\bigl\{(b,a)\bigr\}$\rom;
\item If $(b,a)\not\in\cAD_m(X)$, then there are the  next possibilities\/\rom:
\begin{enumerate}
\item If $\#\cAD_m(X)=3$, then $\Ext_m(X)=\cAD_m(X)=\bigl\{(a,a),\,(b,b)\bigr\}$\rom;
\item If $\cAD_m(X)=\bigl\{(a,a),\,(b,b)\bigr\}$, then 
$$
\Ext_m(X)=\cAD_m(X)=\bigl\{(a,a),\,(b,b)\bigr\};
$$
\item If $\cAD_m(X)=\bigl\{(a,a),\,(a,b)\bigr\}$, then $\Ext_m(X)=\bigl\{(a,a)\bigr\}$\rom;
\item If $\cAD_m(X)=\bigl\{(a,b),\,(b,b)\bigr\}$, then $\Ext_m(X)=\bigl\{(b,b)\bigr\}$\rom;
\item If $\#\cAD_m(X)=1$, then $\Ext_m(X)=\cAD_m(X)$.
\end{enumerate}
\end{enumerate}
In particular,    
\begin{enumerate}
\item If $\diam D=b$ and $(b,b)\in\cAD_m(X)$ for any $D\in\cD_m(X)$, then $\Ext_m(X)=\bigl\{(b,b)\bigr\}$\rom;
\item If $\a(D)=a$ and $(a,a)\in\cAD_m(X)$ for any $D\in\cD_m(X)$, then $\Ext_m(X)=\bigl\{(a,a)\bigr\}$.
\end{enumerate}
\end{lem}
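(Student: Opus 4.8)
The plan is to unwind the definition of an extreme point of $\bcAD_m(X)$ under the hypothesis $1<m<n$, using the fact established just above the Lemma: every coordinate $\a(D)$ and $\diam D$ lies in $\{a,b\}$, hence $\cAD_m(X)$ is a subset of the four-point set $\{(a,a),(a,b),(b,a),(b,b)\}$, and moreover $\bcAD_m(X)=\cAD_m(X)$ since a finite set is closed. Recall a point $(\a,d)$ is extreme if no \emph{other} point $(\a',d')$ of the set has $\a'\ge\a$ and $d'\le d$; in other words, extreme points are the ``south-east maximal'' points of the poset where $(\a',d')\succeq(\a,d)$ iff $\a'\ge\a$ and $d'\le d$. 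Since $a<b$, among our four candidate points the partial order restricted to any subset is easy to read off: $(b,a)$ dominates all three others, $(a,b)$ is dominated by all three others, while $(a,a)$ and $(b,b)$ are incomparable to each other (one has larger $\a$, the other has smaller $d$).

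First I would dispose of case (1): if $(b,a)\in\cAD_m(X)$, then since $(b,a)$ dominates every other element of the four-point set, no other point can be extreme, and $(b,a)$ itself is extreme (nothing can dominate it), so $\Ext_m(X)=\{(b,a)\}$. Next, assuming $(b,a)\notin\cAD_m(X)$, I would go through the possible nonempty subsets of $\{(a,a),(a,b),(b,b)\}$. If $\#\cAD_m(X)=3$, all three are present; since $(a,b)$ is dominated by $(a,a)$ (same $\a$, smaller $d$) and by $(b,b)$, it is not extreme, while $(a,a)$ and $(b,b)$ are mutually incomparable and each dominates $(a,b)$ but not the other, so $\Ext_m(X)=\{(a,a),(b,b)\}$ — this handles (2a), and (2b) is the same computation with $(a,b)$ absent from the start. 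For the two-point sets: $\{(a,a),(a,b)\}$ collapses to $\{(a,a)\}$ because $(a,a)\succ(a,b)$ (case 2c); $\{(a,b),(b,b)\}$ collapses to $\{(b,b)\}$ because $(b,b)\succ(a,b)$ (case 2d); and $\{(a,a),(b,b)\}$ keeps both by incomparability, but that case is already covered by (2b). Finally, if $\#\cAD_m(X)=1$ the single point is trivially extreme, giving (2e).

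For the ``in particular'' clauses: if $\diam D=b$ for every $D$, then $\cAD_m(X)\ss\{(a,b),(b,b)\}$, and the hypothesis $(b,b)\in\cAD_m(X)$ forces $\cAD_m(X)$ to be either $\{(b,b)\}$ or $\{(a,b),(b,b)\}$; in both sub-cases the earlier analysis (2e) or (2d) gives $\Ext_m(X)=\{(b,b)\}$. Symmetrically, if $\a(D)=a$ for every $D$, then $\cAD_m(X)\ss\{(a,a),(a,b)\}$, and $(a,a)\in\cAD_m(X)$ forces it to be $\{(a,a)\}$ or $\{(a,a),(a,b)\}$, whence $\Ext_m(X)=\{(a,a)\}$ by (2e) or (2c).

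Honestly there is no real obstacle here: the whole content is a finite case check on a four-element poset, and the only thing requiring a moment's thought is making sure the enumeration in part (2) is exhaustive — i.e. that every nonempty subset of $\{(a,a),(a,b),(b,b)\}$ has been matched to one of the listed cases (2a)--(2e), noting that the subset $\{(a,a),(b,b)\}$ appears under (2b) and the singletons $\{(a,a)\}$, $\{(b,b)\}$, $\{(a,b)\}$ all fall under (2e). I would present the argument by first recording the domination relations among the four candidate points and then reading each case directly off those relations.
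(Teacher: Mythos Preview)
Your proof is correct and matches the paper's approach exactly: the paper offers no detailed argument, stating only that the Lemma ``follows directly from the definition of an extreme point,'' and your case analysis is precisely that unpacking. One small remark: in clause~(2a) the paper's assertion ``$\Ext_m(X)=\cAD_m(X)=\{(a,a),(b,b)\}$'' contains an evident typo (a three-element set cannot equal a two-element one); your computation correctly yields $\Ext_m(X)=\{(a,a),(b,b)\}$, which is what was meant.
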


Recall that a subgraph of an arbitrary simple graph $H$ is called \emph{clique}, if any its two vertices are connected by an edge, i.e., a clique is a subgraph which is a complete graph itself. Notice that each single-vertex subgraph is also a clique. For convenience, the vertex set of a clique is also referred as a \emph{clique}.

On the set of all cliques an ordering with respect to inclusion is naturally defined, and hence, due to the above remarks, a family of maximal cliques is uniquely defined; this family forms a   \emph{covering of the graph} $H$ in the following sense: the union of all vertex sets of all maximal cliques coincide with the vertex set $V(H)$ of the graph $H$.

If one does not restrict himself by maximal cliques, then, generally speaking, one can find other families of cliques covering the graph $H$. One of the classical problems of Graph Theory is to calculate the minimal possible number of cliques covering a graph $H$. This number is referred as the \emph{clique covering number\/} and is often denoted by $\theta(H)$. It is easy to see that the value $\theta(H)$ is also equal to the least number of cliques whose vertex sets partition  $V(H)$.

Another popular problem is to find the least possible number of colors that is necessary to color the vertices of a simple graph $H$ in such a way that adjacent vertices have different colors. This number is denoted by $\g(H)$ and is referred as the \emph{chromatic number of the graph  $H$}.

For a simple graph $H$, by $H'$ we denote its \emph{dual\/} graph, i.e., the graph having the same vertex set and the complementary set of edges (two vertices are adjacent in $H'$ if and only if they are not adjacent in $H$).

The next fact is well-known.

\begin{prop}
For any simple graph $H$, the equality $\theta(H)=\g(H')$ holds. 
\end{prop}

By $k(H)$ we denote the number of connected components of the graph $H$.

\begin{lem}\label{lem:ComponEdClique}
For any simple graph $H$, the inequality $k(H)\le\theta(H)$ is valid, and the equality holds if and only if the graph $H$ coincides with the disjoint union of its maximal cliques\/\rom: the vertex sets of those cliques do not intersect each other.
\end{lem}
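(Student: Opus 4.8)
The plan is to prove the two inequalities $k(H)\le\theta(H)$ and the equivalence characterizing equality by an essentially combinatorial argument about how cliques sit inside connected components. First I would observe that a clique in $H$ is a connected subgraph, hence every clique is entirely contained in a single connected component of $H$. Therefore, if $\{C_1,\dots,C_\theta\}$ is a clique covering of $H$ of minimum size $\theta=\theta(H)$ (equivalently, using the remark in the text, a partition of $V(H)$ into $\theta(H)$ cliques), each $C_i$ lies in exactly one component, and since the $C_i$ together cover $V(H)$, every component must contain at least one of the $C_i$. This yields a surjection from $\{1,\dots,\theta(H)\}$ onto the set of components, whence $k(H)\le\theta(H)$.

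For the equality case, suppose first that $H$ is the disjoint union of its maximal cliques, i.e.\ the vertex sets of the maximal cliques are pairwise disjoint. Then each component of $H$ is itself a single maximal clique (a component is connected, it is covered by maximal cliques, and disjointness forces it to consist of just one of them), so the family of maximal cliques is a clique covering of size $k(H)$; combined with $k(H)\le\theta(H)$ this gives $\theta(H)=k(H)$. Conversely, assume $\theta(H)=k(H)$ and let $\{C_1,\dots,C_{k(H)}\}$ be a minimum clique covering. By the counting in the first paragraph, the map sending each $C_i$ to its component is a bijection, so each component contains exactly one $C_i$ and that $C_i$ must exhaust the vertex set of its component (otherwise some vertex would be uncovered). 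Hence each component is a complete graph, so it is its own unique maximal clique, and distinct components are vertex-disjoint; therefore $H$ is the disjoint union of its maximal cliques.

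I expect the main point requiring care — though it is not deep — is the step in the converse where one argues that $C_i$ must cover the whole vertex set of its component: this uses that the covering has size exactly $k(H)$, so by pigeonhole no component can receive two of the $C_i$, forcing the single $C_i$ assigned to a component to cover all of it, which in turn forces that component to be complete. One should also note at the outset the harmless convention already fixed in the text, that single-vertex subgraphs count as cliques, so that isolated vertices and the whole discussion behave correctly. The rest is bookkeeping with the equivalence "partition into cliques $\leftrightarrow$ clique covering" and with the observation that a complete graph has a unique maximal clique, namely itself.
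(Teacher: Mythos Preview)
Your argument is correct. Note, however, that the paper states this lemma without proof, so there is no proof in the paper to compare against; your elementary combinatorial argument (cliques are connected, hence a minimum clique partition surjects onto components, with the pigeonhole step forcing each component to be complete in the equality case) is exactly the kind of routine verification the authors presumably had in mind and chose to omit.
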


Construct on the set $X$ a simple graph $G$ whose edge set consists of all the pairs of points of the space $X$ that are distant from each other by $a$ (``graph of minimal distances''). It is clear that  $1\le k(G)\le\theta(G)\le n-1$.

The following two Lemmas are evident.

\begin{lem}\label{lem:diam}
Let $1<m<n$ and $D=\{X_i\}_{i=1}^m\in\cD_m(X)$. Then $\diam D\in\{a,b\}$, and
\begin{enumerate}
\item $\diam D=a$ if and only if each $X_i$ is a clique in the graph $G$\rom;
\item $\diam D=b$ if and only if one of $X_i$ is not a clique in the graph $G$.
\end{enumerate}
\end{lem}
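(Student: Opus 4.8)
\textbf{Proof plan for Lemma~\ref{lem:diam}.}

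The plan is to unwind the definitions directly. Fix $D=\{X_i\}_{i=1}^m\in\cD_m(X)$ with $1<m<n$. Since $X$ is a $2$-distance space with non-zero distances $a<b$, every $\diam X_i$ is either $0$ (when $\#X_i=1$), or $a$, or $b$; hence $\diam D=\sup_i\diam X_i$ lies in $\{0,a,b\}$. The case $\diam D=0$ would force every $X_i$ to be a singleton, which would mean $m=n$, contradicting $m<n$; so in fact $\diam D\in\{a,b\}$, as claimed.

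Next I would prove the two equivalences, which are really one dichotomy. Recall (from the remark just before the lemma) that $G$ is the graph on $X$ whose edges are exactly the pairs at distance $a$. A subset $S\ss X$ is a clique in $G$ precisely when every pair of distinct points of $S$ is at distance $a$; equivalently, $S$ contains no pair at distance $b$; equivalently (since $\diam S\in\{0,a,b\}$) $\diam S\le a$. Therefore: $\diam D=a$ means $\diam X_i\le a$ for every $i$ (and $\diam X_i=a$ for at least one $i$, which is automatic once not all blocks are singletons), i.e.\ every $X_i$ is a clique; this is item~(1). Its negation, $\diam D=b$, means $\diam X_i=b$ for some $i$, i.e.\ some $X_i$ contains a pair at distance $b$, i.e.\ some $X_i$ is not a clique; this is item~(2). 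Since $\diam D$ takes exactly one of the two values $a,b$, items~(1) and~(2) are logically complementary, so establishing one equivalence yields the other.

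There is essentially no obstacle here: the only point requiring a word of care is ruling out $\diam D=0$, which is exactly where the hypothesis $m<n$ is used (without it, the all-singletons partition would be allowed and would have $\diam D=0$). Everything else is a translation between ``distance $a$'' and ``edge of $G$''. I would present the argument in two or three sentences, noting explicitly the identification ``$X_i$ is a clique in $G$ $\lra$ $\diam X_i\le a$ $\lra$ $\diam X_i\ne b$'' and then reading off both items from $\diam D=\max_i\diam X_i$.
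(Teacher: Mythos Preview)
Your proposal is correct; the paper itself gives no proof at all, simply declaring this lemma (together with Lemma~\ref{lem:alpha}) ``evident'', so your direct unwinding of the definitions is exactly the implicit argument the authors omit. The only subtlety you flag---using $m<n$ to exclude $\diam D=0$---is indeed the one point worth noting.
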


\begin{lem}\label{lem:alpha}
Let $1<m<n$ and $D=\{X_i\}_{i=1}^m\in\cD_m(X)$. Then $\a(D)\in\{a,b\}$, and
\begin{enumerate}
\item $\a(D)=b$ if and only if each connected component of the graph $G$ is contained in some  $X_i$\rom;
\item $\a(D)=a$ if and only if some connected component of the graph $G$ intersects several subsets $X_i$.
\end{enumerate}
\end{lem}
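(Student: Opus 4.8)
The statement to prove is Lemma~\ref{lem:alpha}, characterizing when $\a(D) = b$ versus $\a(D) = a$ for a partition $D = \{X_i\}_{i=1}^m$ with $1 < m < n$.

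\medskip

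The plan is to argue directly from the definitions of $\a(D)$ and of the graph $G$ of minimal distances. First I would observe that since $D$ is a partition into $m \ge 2$ blocks, the quantity $\a(D) = \inf\{|X_iX_j| : i \ne j\}$ is well-defined, and since every nonzero distance in $X$ is either $a$ or $b$, each pairwise distance $|X_iX_j|$ lies in $\{a,b\}$; hence $\a(D) \in \{a,b\}$. The two items are logically complementary (exactly one of $\a(D) = a$, $\a(D) = b$ holds, and likewise exactly one of the two right-hand conditions holds), so it suffices to prove one biconditional, say item (1); item (2) is then its contrapositive restated.

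\medskip

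For item (1): the condition $\a(D) = b$ says that $|X_iX_j| = b$ for all $i \ne j$, i.e., every cross-block distance equals $b$, equivalently there is no pair of points $x \in X_i$, $y \in X_j$ with $i \ne j$ and $|xy| = a$ — that is, no edge of $G$ joins two distinct blocks. I would then translate ``no edge of $G$ goes between distinct blocks'' into ``each connected component of $G$ is contained in a single block'': if no edge crosses between blocks, then each block is a union of connected components of $G$ (any path in $G$ starting in $X_i$ stays in $X_i$), so in particular every component lies inside one block; conversely, if every component of $G$ lies in some $X_i$, then both endpoints of any edge of $G$ lie in a common component and hence in a common block, so no edge crosses, giving $\a(D) = b$. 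This is the routine ``edge-crossing versus component-containment'' equivalence.

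\medskip

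I do not anticipate a genuine obstacle here — the lemma is, as the paper says, ``evident'' — the only thing to be careful about is the bookkeeping that makes item (2) precisely the negation of item (1): $\a(D) = a$ iff some cross-block distance is $a$ iff some edge of $G$ joins points in two distinct blocks iff some connected component of $G$ meets more than one block (since a component meeting two blocks must, by connectivity, contain an edge joining different blocks, and conversely such an edge witnesses a component meeting two blocks). I would state both equivalences in one or two sentences and be done.
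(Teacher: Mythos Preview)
Your proposal is correct and matches the paper's treatment: the paper states this lemma as ``evident'' and gives no proof, and your argument spells out precisely the routine edge-crossing versus component-containment reasoning that makes it so. There is nothing to compare against.
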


\begin{cor}\label{cor:kEQtheta}
Let $1<m<n$ and $k:=k(G)=\theta(G)$, then
$$
2d_{GH}(\l\D,X)=
\begin{cases}
\max\{b-\l,\,b,\,\l-b\}&\text{for $m<k$},\\
\max\{b-\l,\,a,\,\l-b\}&\text{for $m=k$},\\
\max\{b-\l,\,a,\,\l-a\}&\text{for $m>k$}.
\end{cases}
$$
\end{cor}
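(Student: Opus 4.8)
The plan is to apply Theorem~\ref{thm:extr} in the case $1<m<n$, after pinning down $\Ext_m(X)$ in each of the three regimes $m<k$, $m=k$, $m>k$ using Lemmas~\ref{lem:diam} and~\ref{lem:alpha} together with the hypothesis $k(G)=\theta(G)=k$. The key elementary fact is that, by Lemma~\ref{lem:ComponEdClique}, the equality $k(G)=\theta(G)$ forces $G$ to be the disjoint union of its maximal cliques; so $X$ splits into $k$ ``blocks'' (the components), each of which is itself a clique (all internal distances $a$), with all distances between distinct blocks equal to $b$.

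First I would treat $m<k$. Since any partition into fewer than $k$ parts must put two whole components into one part or split a component — in either case some part contains two points at distance $b$ — every $D\in\cD_m(X)$ has $\diam D=b$, hence $(b,b)\in\cAD_m(X)$; by the ``In particular'' clause (1) of Lemma~\ref{lem:extremeCases}, $\Ext_m(X)=\{(b,b)\}$, and $h_{b,b}(\l)=\max\{b,\l-b\}$. Plugging into Theorem~\ref{thm:extr} gives $2d_{GH}(\l\D,X)=\max\{b-\l,\,b,\,\l-b\}$. Next, $m>k$: here I would exhibit a partition refining the block decomposition, so that some component is cut and hence, by Lemma~\ref{lem:alpha}(2), $\a(D)=a$; moreover this refinement can be chosen with every part a clique (split each clique into singletons or smaller cliques), so $\diam D=a$ and $(a,a)\in\cAD_m(X)$. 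By the ``In particular'' clause (2) of Lemma~\ref{lem:extremeCases}, $\Ext_m(X)=\{(a,a)\}$, and $h_{a,a}(\l)=\max\{a,\l-a\}$, yielding $2d_{GH}(\l\D,X)=\max\{b-\l,\,a,\,\l-a\}$.

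The borderline case $m=k$ is the one that needs the most care. The block decomposition itself is a partition into exactly $k=m$ parts with each part a clique (so $\diam D=a$) and every component contained in a single part (so $\a(D)=b$): this realizes the point $(b,a)\in\cAD_m(X)$. By Lemma~\ref{lem:extremeCases}(1), the mere presence of $(b,a)$ gives $\Ext_m(X)=\{(b,a)\}$, so $h_{b,a}(\l)=\max\{a,\l-b\}$ and Theorem~\ref{thm:extr} delivers $2d_{GH}(\l\D,X)=\max\{b-\l,\,a,\,\l-b\}$. The one subtlety I would be careful about is that $1<m<n$ must genuinely hold in each regime so that the cited lemmas apply: for $m<k$ we need $k\ge 2$, i.e.\ $G$ disconnected; for $m=k$ we need $1<k<n$, i.e.\ $G$ neither connected nor totally disconnected; and for $m>k$ we need $m<n$. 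When some of these fail we are outside the range $1<m<n$ and the earlier corollaries ($m=1$, $m=n$, $m>n$) take over — so strictly the statement is about those $m$ with $1<m<n$, and I would note that the displayed formulas remain consistent with the boundary corollaries by direct inspection.

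The main obstacle is not any deep argument but rather the bookkeeping of \emph{which} four-point configurations of $\cAD_m(X)$ actually occur, and matching them to the right sub-case of Lemma~\ref{lem:extremeCases}; the hypothesis $k(G)=\theta(G)$ is exactly what collapses this bookkeeping, because it rules out the ``mixed'' configurations where a clique partition must cut a component or a component-respecting partition must use a non-clique part. Once the disjoint-union-of-cliques structure is in hand, each regime reduces to constructing one explicit partition (or observing that none of a certain type exists) and reading off $h_{\a,d}(\l)$, after which Theorem~\ref{thm:extr} finishes the computation mechanically.
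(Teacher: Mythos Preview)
Your approach is exactly the paper's: use Lemma~\ref{lem:ComponEdClique} to get the disjoint-union-of-cliques structure, then in each regime determine $\Ext_m(X)$ via Lemmas~\ref{lem:diam}, \ref{lem:alpha}, and~\ref{lem:extremeCases}, and finish with Theorem~\ref{thm:extr}.

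Two small verifications are missing, though. In the case $m>k$ you exhibit a single partition with $(\a(D),\diam D)=(a,a)$ and then invoke the ``In particular'' clause~(2) of Lemma~\ref{lem:extremeCases}; but that clause requires $\a(D)=a$ for \emph{every} $D\in\cD_m(X)$, which you have not checked. The paper supplies this by pigeonhole: with $m>k$ parts and $k$ components, some component must meet two parts, so $\a(D)=a$ always. Symmetrically, in the case $m<k$ you correctly argue $\diam D=b$ for every $D$, but then write ``hence $(b,b)\in\cAD_m(X)$'', which does not follow; you still need one $D$ with $\a(D)=b$. The paper gets it by lumping whole components together into $m$ groups. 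Both fixes are one line, and with them your argument coincides with the paper's.
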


\begin{proof}
Due to Lemma~\ref{lem:ComponEdClique}, the graph $G$ is partitioned into $k$ cliques.

If $m<k$, then each partition $D=\{X_i\}\in\cD_m(X)$ contains an $X_i$ which is not a clique, therefore, due to Lemma~\ref{lem:diam}, $\diam D=b$. On the other hand, there exists a partition $D$ such that each component of the graph $G$ is contained in some $X_i$, and hence, due to Lemma~\ref{lem:alpha}, $\a(D)=b$. Therefore, $(b,b)\in\cAD_m(X)$, and so, due to Lemma~\ref{lem:extremeCases}, we get $\Ext_m(X)=\bigl\{(b,b)\bigr\}$, that implies the first formula.

If $m=k$, then for a partition $D\in\cD_m(X)$ into $m$ cliques we have $\diam D=a$ and $\a(D)=b$, therefore, $(b,a)\in\cAD_m(X)$, and so, due to Lemma~\ref{lem:extremeCases}, we conclude that $\Ext_m(X)=\bigl\{(b,a)\bigr\}$, that implies the second formula.

At last, if $m>k$, then for any partition $D=\{X_i\}\in\cD_m(X)$ some component of the graph $G$ intersects different $X_i$, and so, due to Lemma~\ref{lem:alpha}, the equality $\a(D)=a$ holds for any such partition. On the other hand, there exists a partition $D$ such that each its component is a clique, therefore, for such partition we have $\diam D=a$. So, $(a,a)\in\cAD_m(X)$, and hence, due to Lemma~\ref{lem:extremeCases}, we obtain $\Ext_m(X)=\bigl\{(a,a)\bigr\}$, that implies the third formula.
\end{proof}

It remains to consider the case $k(G)<\theta(G)$.

\begin{cor}
Let $1<m<n$ and $k:=k(G)<\theta(G)=:\theta$, then
$$
2d_{GH}(\l\D,X)=
\begin{cases}
\max\{b-\l,\,b,\,\l-b\}&\text{for $m\le k$},\\
\max\{b-\l,\,b,\,\l-a\}&\text{for $k<m<\theta$},\\
\max\{b-\l,\,a,\,\l-a\}&\text{for $m\ge\theta$}.
\end{cases}
$$
\end{cor}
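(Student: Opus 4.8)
The plan is to mirror the structure of the proof of Corollary~\ref{cor:kEQtheta}, but now with $k(G)<\theta(G)$, so that the transition from ``all parts are cliques'' to ``some part is not a clique'' happens at $m=\theta$ rather than at $m=k$. First I would split into the three ranges $m\le k$, $k<m<\theta$, and $m\ge\theta$, and in each range determine which of the four points $(a,a)$, $(a,b)$, $(b,a)$, $(b,b)$ lie in $\cAD_m(X)$, then read off $\Ext_m(X)$ via Lemma~\ref{lem:extremeCases} and plug into Theorem~\ref{thm:extr}, recalling $\diam X=b$ and $h_{\a,d}(\l)=\max\{d,\l-\a\}$.

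For $m\le k$: since $\theta(G)\ge m$ cannot be beaten (any partition into $m$ parts would have to partition $G$ into at most $m<\theta$... actually into at most $m\le k\le\theta$ cliques, impossible unless $m=k$ and even then the components need not be cliques), every $D\in\cD_m(X)$ has some non-clique part, so by Lemma~\ref{lem:diam} $\diam D=b$ always; on the other hand, since $m\le k$, one can group the $k$ components into $m$ non-empty blocks, giving $\a(D)=b$ by Lemma~\ref{lem:alpha}. Hence $(b,b)\in\cAD_m(X)$ with $\diam D=b$ forced, so by the ``in particular'' clause of Lemma~\ref{lem:extremeCases} we get $\Ext_m(X)=\{(b,b)\}$, and Theorem~\ref{thm:extr} gives $2d_{GH}(\l\D,X)=\max\{b-\l,\,b,\,\l-b\}$. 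For $k<m<\theta$: now $m>k$ forces, by Lemma~\ref{lem:alpha}, that $\a(D)=a$ for \emph{every} $D$ (some component must split); and $m<\theta$ forces, by Lemma~\ref{lem:diam}, that $\diam D=b$ for \emph{every} $D$ (no partition into fewer than $\theta$ cliques exists). So $\cAD_m(X)=\{(a,b)\}$, a single point, hence $\Ext_m(X)=\{(a,b)\}$ by case 2(f) of Lemma~\ref{lem:extremeCases}, and Theorem~\ref{thm:extr} yields $\max\{b-\l,\,b,\,\l-a\}$. For $m\ge\theta$: there is a partition of $G$ into $\theta$ cliques, which one can refine to $m\ge\theta$ cliques (splitting cliques into sub-cliques keeps them cliques; here one needs $m\le n$, which holds), giving a $D$ with $\diam D=a$; but $m>k$ again forces $\a(D)=a$ for all $D$. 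Thus $(a,a)\in\cAD_m(X)$ with $\a(D)=a$ forced, so the second ``in particular'' clause of Lemma~\ref{lem:extremeCases} gives $\Ext_m(X)=\{(a,a)\}$, and Theorem~\ref{thm:extr} gives $\max\{b-\l,\,a,\,\l-a\}$.

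The routine checks are: confirming that refining a clique partition of size $\theta$ up to size $m$ is possible whenever $\theta\le m<n$ (split off single vertices one at a time), and confirming that $m<\theta$ really does preclude $\diam D=a$ (this is just the definition of $\theta$ as the minimum number of cliques partitioning $V(G)$, combined with Lemma~\ref{lem:diam}(1)). The main obstacle, such as it is, is bookkeeping: making sure in the range $k<m<\theta$ that $\cAD_m(X)$ is exactly the single point $(a,b)$ and not, say, also containing $(b,b)$ — but $\a(D)=a$ for all $D$ rules out any point with first coordinate $b$, and $\diam D=b$ for all $D$ rules out any point with second coordinate $a$, so $(a,b)$ is the only survivor. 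One should also note this range is non-empty precisely when $\theta>k+1$; if $\theta=k+1$ the middle case is vacuous and the formula still reads correctly as the two-case split, so no separate argument is needed. Assembling these three computations gives the claimed piecewise formula.
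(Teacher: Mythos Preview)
Your proof is correct and follows essentially the same approach as the paper's: both determine $\Ext_m(X)$ in each of the three ranges by using Lemmas~\ref{lem:diam} and~\ref{lem:alpha} to pin down which of the four candidate points lie in $\cAD_m(X)$, then invoke Lemma~\ref{lem:extremeCases} and Theorem~\ref{thm:extr}. Your write-up is in fact slightly more explicit than the paper's (you spell out the refinement of a clique partition and the vacuous-middle-case remark); the only slip is the reference to ``case 2(f)'' of Lemma~\ref{lem:extremeCases}, which should be 2(e).
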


\begin{proof}
The cases $m<k$ and $m>\theta$ can be proved just as the first and the last formulas from Corollary~\ref{cor:kEQtheta}, respectively.

Let $m=k<\theta$, then as in the proof of Corollary~\ref{cor:kEQtheta} the equality $\diam D=b$ holds for each partition $D\in\cD_m(X)$. Since $m=k$, then the partition $D$ into $X_i=V(G_i)$ belongs to $\cD_m(X)$, and hence $\a(D)=b$. But then $\Ext_m(X)=\bigl\{(b,b)\bigr\}$ in accordance with Lemma~\ref{lem:extremeCases}, that proves the first formula.

Let $k<m<\theta$. In this case we have $\diam D=b$ and $\a(D)=a$ for any $D\in\cD_m(X)$. Thus, in this case $\Ext_m(X)=\cAD_m(X)=\bigl\{(a,b)\bigr\}$, that implies the second formula.

At last, let $m=\theta$, then for any $D\in\cD_m(X)$ there exists a component of the graph $G$ intersecting more than one element of the partition $D$, so, due to Lemma~\ref{lem:alpha}, we have $\a(D)=a$. However, for the partition $D$ of the graph $G$ into $m$ cliques the equality $\diam D=a$ holds, and hence, $(a,a)\in\cAD_m(X)$, and so we have $\Ext_m(X)=\bigl\{(a,a)\bigr\}$ in accordance with Lemma~\ref{lem:extremeCases}, that implies the third formula.
\end{proof}

Collect all the above results.

\begin{thm}\label{thm:main}
Let $X$ be a finite $2$-distance space with non-zero distances $a$ and $b$, $a<b$, $n=\#X$, and let $\l\D$ be a simplex, $m=\#\l\D$. By $G$ we denote the graph with the vertex set $X$ and the edge set consisting of all the pairs of points from $X$ that are distant from each other by $a$. Let $k:=k(G)$ be the number of connected components of the graph $G$, and let $\theta:=\theta(G)$ be its clique covering number. Then
\begin{enumerate}
\item If $k=\theta$, then
$$
2d_{GH}(\l\D,X)=
\begin{cases}
b&\text{for $m=1$},\\
\max\{b,\,\l-b\}&\text{for $1<m<k=\theta$},\\
\max\{b-\l,\,a,\,\l-b\}&\text{for $m=k=\theta$},\\
\max\{b-\l,\,a,\,\l-a\}&\text{for $k=\theta<m<n$},\\
\max\{b-\l,\,\l-a\}&\text{for $m=n$},\\
\max\{b-\l,\,\l\}&\text{for $m>n$};
\end{cases}
$$
\item If $k<\theta$, then
$$
2d_{GH}(\l\D,X)=
\begin{cases}
b&\text{for $m=1$},\\
\max\{b,\,\l-b\}&\text{for $1<m\le k$},\\
\max\{b,\,\l-a\}&\text{for $k<m<\theta$},\\
\max\{b-\l,\,a,\,\l-a\}&\text{for $\theta\le m<n$},\\
\max\{b-\l,\,\l-a\}&\text{for $m=n$},\\
\max\{b-\l,\,\l\}&\text{for $m>n$}.
\end{cases}
$$
\end{enumerate}
\end{thm}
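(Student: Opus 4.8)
The statement is a summary: every value on its right-hand side has already been computed in the corollaries preceding it, each corollary covering one range of the parameter $m=\#\l\D$. So the plan is essentially to assemble those corollaries into a single table and to reconcile the purely cosmetic differences in how the maxima are written. Before doing so I would record the elementary constraints that make the case ranges consistent: one always has $1\le k(G)\le\theta(G)\le n-1$ as noted above, and moreover $\theta(G)\ge 2$, since $\theta(G)=1$ would force $G$ to be the complete graph on $X$, i.e.\ all non-zero distances of $X$ equal to $a$, contradicting that $X$ is genuinely a $2$-distance space; in particular $n\ge3$ whenever the non-degenerate cases occur.

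Next I would dispose of the three ranges of $m$ that do not see the fine structure of $G$. For $m=1$, the corollary drawn from Proposition~\ref{prop:GH_simple} gives $2d_{GH}(\l\D,X)=b$; for $m=n$, where $\cD_m(X)$ consists of the single partition into singletons, the corollary drawn from Theorem~\ref{thm:extr} gives $2d_{GH}(\l\D,X)=\max\{b-\l,\l-a\}$; and for $m>n$, Theorem~\ref{thm:dist-n-simplex-bigger-dim} gives $2d_{GH}(\l\D,X)=\max\{b-\l,\l\}$. These three lines are identical in parts (1) and (2) of the theorem. For the remaining range $1<m<n$ I would split on whether $k(G)=\theta(G)$ or $k(G)<\theta(G)$: the first alternative is exactly Corollary~\ref{cor:kEQtheta}, which supplies the values on $m<k$, $m=k$, $m>k$, and the second alternative is the corollary immediately following it, which supplies the values on $m\le k$, $k<m<\theta$, $m\ge\theta$. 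Quoting those two corollaries verbatim already fills in every remaining entry of the theorem, up to rewriting.

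The one rewriting step is trivial: whenever the constant $b$ appears inside one of those maxima it is accompanied there by the term $b-\l$, and since $\l>0$ we have $b-\l<b$, so $b-\l$ may be deleted; this turns $\max\{b-\l,b,\l-b\}$ into $\max\{b,\l-b\}$ and $\max\{b-\l,b,\l-a\}$ into $\max\{b,\l-a\}$, which are precisely the forms stated in the theorem. I do not expect any genuine mathematical obstacle here — everything substantive was done in the earlier corollaries. The only point that needs care, and the step I would check most carefully, is the bookkeeping: that the listed ranges of $m$ partition the positive integers (using $2\le k=\theta\le n-1$ in part (1), where the observation $\theta\ge2$ is what keeps the $m=1$ line disjoint from the $m=k$ line, and $1\le k<\theta\le n-1$ in part (2)), and that the open ranges such as $(1,k)$, $(1,k]$, $(k,\theta)$ and $(k,n)$ are permitted to be empty without affecting the conclusion.
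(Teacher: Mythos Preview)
Your proposal is correct and mirrors the paper exactly: the paper introduces Theorem~\ref{thm:main} with the single sentence ``Collect all the above results'' and gives no further argument, so the content is precisely the assembly of the earlier corollaries that you describe. Your added remarks---that $\theta(G)\ge 2$ because a genuine $2$-distance space cannot have $G$ complete, and that $b-\l$ may be dropped from any maximum already containing $b$---make explicit two small points the paper leaves to the reader, but there is no difference in approach.
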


In paper~\cite{IvaTuzBorsuk} we considered \emph{Generalized Borsuk Problem\/}: Is it possible to partition a given bounded metric space $X$ into a given number of subsets having strictly less diameter than $X$ has. The following theorem is proved in~\cite{IvaTuzBorsuk}.

\begin{thm}[\cite{IvaTuzBorsuk}]\label{thm:Borsuk}
Let $X$ be an arbitrary bounded metric space, and $m$ a cardinal number with $m\le\#X$. Choose an arbitrary number $l$, $0<\l<\diam X$, then
\begin{enumerate}
\item The space $X$ can be partitioned into $m$ subsets of strictly less diameters if and only if  $2d_{GH}(\l\D_m,X)<\diam X$\rom;
\item The space $X$ can not be partitioned into $m$ subsets of strictly less diameters if and only if  $2d_{GH}(\l\D_m,X)=\diam X$.
\end{enumerate}
\end{thm}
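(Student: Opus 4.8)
\emph{Plan of the proof.}
Write $D=\diam X$; the hypothesis $0<\l<D$ forces $D>0$. For $m=1$ both assertions reduce to the equality $2d_{GH}(\l\D_1,X)=D$ of Proposition~\ref{prop:GH_simple}, since a partition of $X$ into one subset cannot lower the diameter; so assume $m\ge 2$. The two assertions are logically complementary once we know $2d_{GH}(\l\D_m,X)\le D$ always: granting this together with assertion~(1), assertion~(2) follows because ``$2d_{GH}(\l\D_m,X)=D$'' is then equivalent to ``$\neg\bigl(2d_{GH}(\l\D_m,X)<D\bigr)$''. Hence the plan is to prove the bound $2d_{GH}(\l\D_m,X)\le D$ together with assertion~(1), extracting both from Theorem~\ref{thm:extr}.

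Applying Theorem~\ref{thm:extr} (legitimate since $m=\#\l\D_m\le\#X$) gives
$$
2d_{GH}(\l\D_m,X)=\max\Bigl\{D-\l,\ \inf_{(\a,d)\in\Ext_m(X)}h_{\a,d}(\l)\Bigr\},\qquad h_{\a,d}(\l)=\max\{d,\,\l-\a\}.
$$
Every $D'=\{X_i\}_i\in\cD_m(X)$ satisfies $0\le\a(D')=\inf_{i\ne j}|X_iX_j|\le D$ and $0\le\diam D'\le D$; these are closed conditions, hence pass to the closure $\bcAD_m(X)=\overline{\cAD_m(X)}$. Consequently $h_{\a,d}(\l)=\max\{d,\l-\a\}\le\max\{D,\l\}=D$ on all of $\bcAD_m(X)$, in particular on $\Ext_m(X)$; together with $D-\l<D$ this yields both $2d_{GH}(\l\D_m,X)\le D$ and the equivalence $2d_{GH}(\l\D_m,X)<D\iff\inf_{(\a,d)\in\Ext_m(X)}h_{\a,d}(\l)<D$.

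The crux is the identity
$$
\inf_{(\a,d)\in\Ext_m(X)}h_{\a,d}(\l)\ =\ \inf_{D'\in\cD_m(X)}h_{\a(D'),\,\diam D'}(\l).
$$
Here ``$\ge$'' is clear, since $\Ext_m(X)\subseteq\overline{\cAD_m(X)}$ and $h$ is continuous. For ``$\le$'' I would use that $h_{\a,d}(\l)$ is non-increasing in $\a$ and non-decreasing in $d$, so it suffices to show that each point of $\bcAD_m(X)$ is dominated by an extreme point (one with larger $\a$-coordinate and smaller $d$-coordinate). This is a compactness statement: by the previous paragraph $\bcAD_m(X)$ is closed and contained in $[0,D]^2$, hence compact; given a point, the set of points of $\bcAD_m(X)$ dominating it is non-empty and compact, so it contains a point with least $d$-coordinate and, among those, greatest $\a$-coordinate, and such a point is extreme by the definition.

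With the crux identity in hand, assertion~(1) follows by inspection: if $2d_{GH}(\l\D_m,X)<D$ then some $D'=\{X_i\}_i\in\cD_m(X)$ has $h_{\a(D'),\diam D'}(\l)<D$, forcing $\diam D'<D$, so $D'$ splits $X$ into $m$ non-empty subsets each of diameter $\le\diam D'<D$; conversely a partition $D'\in\cD_m(X)$ with $\diam D'<D$ satisfies $h_{\a(D'),\diam D'}(\l)=\max\{\diam D',\l-\a(D')\}<D$ (as $\l<D$ and $\a(D')\ge0$), whence $2d_{GH}(\l\D_m,X)<D$. The step I expect to be the main obstacle is precisely the domination-by-extreme-points lemma above — the bridge between the infimum over $\Ext_m(X)$ that Theorem~\ref{thm:extr} supplies and the infimum over genuine partitions that Borsuk's problem concerns. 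A smaller point to settle is the reading of ``partitioned into $m$ subsets of strictly less diameters'': for finite $m$ it coincides with $\diam D'<D$ since $\diam D'=\max_i\diam X_i$, and for infinite $m$ one takes it to mean $\diam D'<D$; also one needs $\cD_m(X)\ne\emptyset$, which is exactly where $m\le\#X$ enters. (Alternatively one can bypass Theorem~\ref{thm:extr} and argue from $2d_{GH}(X,Y)=\inf_R\dis R$: a partition $\{X_i\}$ with $\diam D'<D$ gives the correspondence $\{(x,i):x\in X_i\}$ of distortion $\max\{\diam D',\l,D-\l\}<D$, and a correspondence of distortion $<D$ refines to such a partition; but the route above stays within the paper's framework.)
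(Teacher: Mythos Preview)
The paper does not prove Theorem~\ref{thm:Borsuk}; it is quoted verbatim from~\cite{IvaTuzBorsuk} and used as a black box. So there is no ``paper's own proof'' to compare against. Your derivation via Theorem~\ref{thm:extr} is correct and self-contained within the present paper's toolkit: the compactness argument for the domination-by-extreme-points step is sound (the closure $\bcAD_m(X)$ sits in $[0,D]^2$, hence is compact, and the lexicographic selection you describe produces an extreme point dominating any given point), and the remaining inequalities go through as you wrote them.

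Two remarks. First, your route is almost certainly \emph{not} the one taken in~\cite{IvaTuzBorsuk}, since Theorem~\ref{thm:extr} comes from the later paper~\cite{IvaTuzUltra}; the parenthetical alternative you sketch at the end---via $2d_{GH}(X,Y)=\inf_R\dis R$ and the correspondence $\{(x,i):x\in X_i\}$---is the more direct argument and is presumably closer to the original. What your approach buys is that it stays entirely inside the formula of Theorem~\ref{thm:extr}, which is the engine of the present paper; the correspondence argument buys independence from that machinery. Second, your own caveat about infinite $m$ is the only genuine soft spot: your proof establishes the equivalence with $\sup_i\diam X_i<D$, which for infinite $m$ is a priori stronger than ``each $\diam X_i<D$''. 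For the applications in this paper (finite $X$) the issue does not arise.
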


Theorems~\ref{thm:main} and~\ref{thm:Borsuk} imply the following solution to Generalized Borsuk Problem for finite $2$-distance spaces.

\begin{cor}\label{cor:Borsuk}
Let $X$ be a finite $2$-distance space with non-zero distances $a$ and $b$, $a<b$, and $n=\#X$. By $G$ we denote the graph with vertex set $X$ and edge set consisting of all pairs of points from $X$ distant from each other by $a$. Let $\theta(G)$ be the clique covering number of the graph $G$. Then the space $X$ can not be partitioned into $m$ subsets of strictly less diameters if $m<\theta(G)$, and can be partitioned into $m$ subsets of strictly less diameters if  $\theta(G)\le m\le n$.
\end{cor}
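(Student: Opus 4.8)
The plan is to combine the formulas collected in Theorem~\ref{thm:main} with the dichotomy in Theorem~\ref{thm:Borsuk}, so the only thing to do is read off, for each range of $m$, whether $2d_{GH}(\l\D_m,X)$ equals $\diam X=b$ or is strictly less than $b$, keeping in mind that $\l$ is fixed with $0<\l<\diam X=b$. First I would dispose of the small cases $1\le m\le k(G)$: by Theorem~\ref{thm:main}, for $m=1$ the value is exactly $b$, and for $1<m\le k(G)$ (in either the $k=\theta$ or the $k<\theta$ branch) the value is $\max\{b,\l-b\}=b$ since $\l-b<0<b$. Hence by Theorem~\ref{thm:Borsuk}(2) the space cannot be partitioned into $m$ subsets of smaller diameter when $m\le k(G)$, and a fortiori when $m<\theta(G)$, at least in the portion $m\le k(G)$.

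Next I would handle the intermediate range $k(G)<m<\theta(G)$, which only occurs in case (2) of Theorem~\ref{thm:main}. There the formula reads $2d_{GH}(\l\D_m,X)=\max\{b,\l-a\}$, and since $\l<b$ gives $\l-a<b-a<b$, this maximum is again exactly $b$. So Theorem~\ref{thm:Borsuk}(2) applies and $X$ cannot be split into $m$ parts of strictly smaller diameter for any $m<\theta(G)$; this finishes the ``cannot'' half. In case (1), where $k=\theta$, there is simply no $m$ with $k(G)<m<\theta(G)$, so nothing more is needed there.

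Then I would treat the range $\theta(G)\le m\le n$. For $\theta(G)\le m<n$, Theorem~\ref{thm:main} gives $2d_{GH}(\l\D_m,X)=\max\{b-\l,\,a,\,\l-a\}$ in both branches (for $m=k=\theta$ in case (1) it is $\max\{b-\l,a,\l-b\}$, which is even smaller). Each of the three terms $b-\l$, $a$, $\l-a$ is strictly less than $b$: indeed $b-\l<b$ because $\l>0$; $a<b$ by hypothesis; and $\l-a<b-a<b$ because $\l<b$. For $m=n$ the value is $\max\{b-\l,\l-a\}<b$ by the same estimates. Hence in every case $2d_{GH}(\l\D_m,X)<\diam X$ for $\theta(G)\le m\le n$, and Theorem~\ref{thm:Borsuk}(1) yields a partition into $m$ subsets of strictly smaller diameter. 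Combining the two halves gives exactly the statement of Corollary~\ref{cor:Borsuk}.

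The argument is essentially bookkeeping, so there is no serious obstacle; the only point requiring a little care is to make sure the case split on $m$ matches the right branch ($k=\theta$ versus $k<\theta$) of Theorem~\ref{thm:main} and that the boundary values $m=k$, $m=\theta$, $m=n$ are assigned to the correct line of each piecewise formula. One should also note explicitly that the conclusion does not depend on the auxiliary parameter $\l$ (any admissible choice $0<\l<b$ works), which is why the statement of the corollary can be phrased purely in terms of $\theta(G)$, $n$, and $m$.
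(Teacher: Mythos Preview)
Your proposal is correct and is exactly the approach the paper intends: the paper gives no separate proof of the corollary, merely stating that it follows from Theorems~\ref{thm:main} and~\ref{thm:Borsuk}, and your case-by-case verification is precisely that deduction. One small bookkeeping slip to fix: in the $k=\theta$ branch of Theorem~\ref{thm:main} the line $\max\{b,\l-b\}$ is valid only for $1<m<k$, not for $1<m\le k$; at $m=k=\theta$ the value is already $\max\{b-\l,a,\l-b\}<b$, so your blanket claim ``cannot be partitioned when $m\le k(G)$'' is false at that endpoint in case~(1). This does no harm to the argument, since for the ``cannot'' half you only need $m<\theta$ (which in that branch means $m<k$), and you correctly assign $m=k=\theta$ to the ``can'' half in your third paragraph---exactly the boundary-value care you yourself flagged.
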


Reformulate Corollary~\ref{cor:Borsuk} and give the exact value of the clique covering number in terms of the Gromov--Hausdorff distance.

\begin{cor}\label{cor:clique}
Let $G=(V,E)$ be an arbitrary finite graph. Define a metric on $V$ as follows\/\rom: the distance between two vertices of $G$ is equal to $a$ if and only if they are adjacent in $G$, otherwise it equals $b$, with $a<b\le2a$. Let $m$ be the greatest positive integer such that $2d_{GH}(a\D,V)=b$, where $a\D$ is a simplex consisting of $m$ vertices. Then $\theta(G)=m+1$.
\end{cor}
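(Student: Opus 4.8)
The plan is to read off the answer directly from Theorem~\ref{thm:main}, using the extra hypothesis $a<b\le 2a$ to guarantee that the constructed distance function is genuinely a metric on $V$ (the triangle inequality requires $b\le 2a$, and all triangles have side lengths in $\{a,b\}$). So the first step is to record that $(V,|\cdot\cdot|)$ is a finite $2$-distance space with non-zero distances $a$ and $b$, and that the graph of minimal distances associated to this space (in the sense of Theorem~\ref{thm:main}) is precisely $G$ itself, since two vertices are at distance $a$ exactly when they are adjacent in $G$.

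Next I would identify, for the fixed choice $\l=a$, exactly which values of $m$ give $2d_{GH}(a\D,V)=b$. Plug $\l=a$ into both branches of Theorem~\ref{thm:main}. In case $k=\theta$: for $m=1$ the value is $b$; for $1<m<k=\theta$ the value is $\max\{b,a-b\}=b$ (using $a<b$); for $m=k=\theta$ it is $\max\{b-a,\,a,\,0\}=\max\{b-a,a\}=a<b$ (again using $b\le 2a$, so $b-a\le a$); and for larger $m$ the value stays $<b$ since it is $\max\{b-a,\,a\}$ or $\max\{b-a,\,0\}$ or $\max\{b-a,\,a-a\}$, all of which are $\le a<b$. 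In case $k<\theta$: for $m=1$ and for $1<m\le k$ the value is $b$; for $k<m<\theta$ it is $\max\{b,a-a\}=b$; for $\theta\le m$ it is $\le a<b$ as before. In both cases, the set of $m\in\{1,\dots,n\}$ with $2d_{GH}(a\D,V)=b$ is exactly $\{1,2,\dots,\theta-1\}$ (when $\theta=1$ this set is empty, but then $G$ is complete and the formula $\theta=m+1$ with the convention that the greatest such $m$ is $0$ still holds — I would note this degenerate case explicitly).

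Therefore the greatest positive integer $m$ with $2d_{GH}(a\D,V)=b$ is $m=\theta-1$, which rearranges to $\theta(G)=m+1$, as claimed. I would also remark that one should check the behavior at $m\ge n$: Theorem~\ref{thm:main} gives $\max\{b-a,\,0\}=b-a<b$ there, consistent with the claim, and in any case $\theta(G)\le n-1<n$ so the maximizing $m$ never reaches that regime.

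The only real subtlety — and the one place I would slow down — is verifying that the claimed metric is well-defined and that $\diam V=b$, so that the hypotheses of Theorem~\ref{thm:main} genuinely apply; this is where the assumption $b\le 2a$ is used and should not be skipped. Everything else is a routine case check: substitute $\l=a$ into the piecewise formula and compare each piece with $b$ using $a<b\le 2a$. A secondary point worth a sentence is the convention for the word ``greatest positive integer $m$'': if $G$ is a complete graph then $\theta(G)=1$ and no positive $m$ satisfies the equation, so either one excludes complete graphs or adopts the convention $m=0$; I would state which convention is in force so that the formula $\theta(G)=m+1$ is literally correct.
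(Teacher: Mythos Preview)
Your proposal is correct and matches the paper's intended argument. The paper gives no explicit proof here, simply presenting the statement as a reformulation of Corollary~\ref{cor:Borsuk} (which in turn follows from Theorems~\ref{thm:main} and~\ref{thm:Borsuk}); your direct case check of Theorem~\ref{thm:main} at $\l=a$ is exactly the computation underlying that reformulation, and your handling of the inequality $b-a\le a$ from $b\le 2a$, together with the degenerate complete-graph case, is appropriate.
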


Let us give another Corollary that permits to calculate the chromatic number of a graph in terms of the Gromov--Hausdorff distance.

\begin{cor}\label{cor:chrom}
Let $G=(V,E)$ be an arbitrary finite graph. Define a metric on $V$ as follows\/\rom: the distance between two vertices of $G$ is equal to $b$ if and only if they are adjacent in $G$, otherwise it equals $a$, with $a<b\le2a$. Let $m$ be the greatest positive integer such that $2d_{GH}(a\D,V)=b$, where $a\D$ is a simplex consisting of $m$ vertices. Then $\g(G)=m+1$.
\end{cor}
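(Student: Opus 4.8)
The plan is to reduce Corollary~\ref{cor:chrom} to Corollary~\ref{cor:clique} by passing to the dual graph. Given $G=(V,E)$ with the metric in which adjacent vertices are at distance $b$ and non-adjacent ones at distance $a$, observe that this is exactly the metric produced in Corollary~\ref{cor:clique} when one starts from the dual graph $G'$: in $G'$ two vertices are adjacent iff they are non-adjacent in $G$, so the ``distance $a$ iff adjacent'' recipe applied to $G'$ gives precisely our metric on $V$. The constraint $a<b\le 2a$ is the same in both statements, so all hypotheses transfer verbatim.

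Next I would apply Corollary~\ref{cor:clique} to $G'$: the greatest positive integer $m$ with $2d_{GH}(a\D,V)=b$ (for $a\D$ a simplex on $m$ vertices) satisfies $\theta(G')=m+1$. Here the graph of minimal distances attached to the metric space $V$ is $G'$ itself, and the metric space $V$ in Corollary~\ref{cor:clique} is literally the same metric space $V$ as in Corollary~\ref{cor:chrom}, so the integer $m$ is the same in both places. Finally I would invoke the classical identity $\theta(H)=\g(H')$ (the Proposition preceding Lemma~\ref{lem:ComponEdClique}) with $H=G'$: since $(G')'=G$, we get $\theta(G')=\g(G)$. Combining, $\g(G)=\theta(G')=m+1$, which is the claim.

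The only genuine point to check carefully is that the graph of minimal distances of the metric space $V$, as required in Corollary~\ref{cor:clique}, is indeed $G'$ and not $G$; this is immediate since the smaller of the two distances is $a$ and, by construction, $|uv|=a$ exactly when $u,v$ are non-adjacent in $G$, i.e., adjacent in $G'$. A secondary bookkeeping point is that Corollary~\ref{cor:clique} already presupposes $V$ is a genuine $2$-distance space, which forces $E\ne\emptyset$ and $E$ not complete; in the degenerate cases one either has $\g(G)=1$ or the metric collapses, and these should be excluded or handled by the same degenerate conventions used for Corollary~\ref{cor:clique}. I do not expect any real obstacle here: the statement is a one-line consequence of Corollary~\ref{cor:clique} together with $\theta(H)=\g(H')$ and the involutivity of graph duality, so the ``hard part'' is merely making explicit that the two corollaries share the same metric space and the same integer $m$.
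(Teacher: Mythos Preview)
Your argument is correct and is precisely the intended derivation: the paper states the Proposition $\theta(H)=\g(H')$ exactly so that Corollary~\ref{cor:chrom} follows from Corollary~\ref{cor:clique} by passing to the dual graph, and it offers no separate proof of Corollary~\ref{cor:chrom}. Your identification of the metric in Corollary~\ref{cor:chrom} for $G$ with the metric in Corollary~\ref{cor:clique} for $G'$, together with $(G')'=G$, is the whole content; your remark on the degenerate complete/empty cases applies equally to both corollaries and is not specific to your reduction.
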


\end{document}